\documentclass{article} 
\usepackage{iclr2026_conference,times}

\usepackage{amsmath,amsfonts,bm}

\def\eqref#1{equation~\ref{#1}}

\def\1{\bm{1}}

\DeclareMathAlphabet{\mathsfit}{\encodingdefault}{\sfdefault}{m}{sl}
\SetMathAlphabet{\mathsfit}{bold}{\encodingdefault}{\sfdefault}{bx}{n}

\usepackage{hyperref}
\usepackage{url}
\usepackage{graphicx}
\usepackage{subcaption}
\usepackage{booktabs}
\usepackage{amsthm}
\usepackage{algorithm}
\usepackage{algpseudocode}
\usepackage{multirow}
\theoremstyle{plain}
\newtheorem{lem}{Lemma}[subsection]

\title{Kinetic-based regularization: Learning spatial derivatives and PDE applications}

\author{%
  Abhisek Ganguly\thanks{Correspondence to abhisek@jncasr.ac.in} \, \& Santosh Ansumali \\
  Engineering Mechanics Unit \\
  Jawaharlal Nehru Centre for Advanced Scientific Research \\
  Jakkur, Bengaluru-560064, Karnataka, India \\
  \texttt{\{abhisek,ansumali\}@jncasr.ac.in} \\[2ex]
  \AND
  Sauro Succi \\
  Center for Life Nano-Neuroscience at La Sapienza \\
  Fondazione Istituto Italiano di Tecnologia \\
  Viale Regina Elena 291, 00161 Roma, Italy \\
  \texttt{sauro.succi@iit.it}
}
\iclrfinalcopy 
\begin{document}

\maketitle
\begin{abstract}
Accurate estimation of spatial derivatives from discrete and noisy data is central to scientific machine learning and numerical solutions of PDEs. We extend kinetic-based regularization (KBR), a localized multidimensional kernel regression method with a single trainable parameter, to learn spatial derivatives with provable second-order accuracy in 1D. Two derivative-learning schemes are proposed: an explicit scheme based on the closed-form prediction expressions, and an implicit scheme that solves a perturbed linear system at the points of interest. The fully localized formulation enables efficient, noise-adaptive derivative estimation without requiring global system solving or heuristic smoothing. Both approaches exhibit quadratic convergence, matching second-order finite difference for clean data, along with a possible high-dimensional formulation. Preliminary results show that coupling KBR with conservative solvers enables stable shock capture in 1D hyperbolic PDEs, acting as a step towards solving PDEs on irregular point clouds in higher dimensions while preserving conservation laws.
\end{abstract}

\section{Introduction} 
Over the years, regression has seen significant progress within the frame of machine learning. However, accurate derivative evaluation is a key area currently driving machine learning applications in computational physics (\cite{wang2020explicit}, \cite{abramavicius2025evaluationderivativesusingapproximate}).
Automatic differentiation (\cite{baydin2018automatic}) is now well-employed in modern deep learning frameworks, enabling the computation of spatial and temporal derivatives in physics-informed models and also forming the basis of backpropagation (\cite{rumelhart1986learning}). However, it also faces significant challenges (\cite{gholami2019anode},~\cite{margossian2019review},~\cite{huckelheim2024adpitfalls}). 

There has been a marked interest in deep learning approaches for solving PDEs, e.g. PINNs (\cite{raissi2019physics}). Recent efforts recognize the importance of conservative variants (\cite{Jagtap2020cPINNs}) that improve flux consistency. Though being parameter-heavy with expensive optimization, PINNs' functionality in higher dimensions cannot be ignored (\cite{kumar2026robust}). Related operator and derivative learning frameworks (\cite{lu2021learning},~\cite{li2021fourier},~\cite{Ledesma2020DifferentialNeuralNetworks}) further highlight the growing need for robust, physics-aware differentiation. Many such learning schemes, however, lack rigorous convergence and error analysis \cite{wang2022and}, which hinders systematic performance assessment.

Recently, kinetic-based regularization (KBR,~\cite{Ganguly_2025}) was introduced to revive Radial Basis Function networks, one of the earliest kernel-based neural models (\cite{broomhead1988rbf}). 
The physics-inspired kernel regression scheme adopts a fully localized formulation that avoids solving large-scale global systems of equations. This enables learnable noise removal using a single trainable parameter, independent of the problem dimensionality.

In this paper, we extend KBR to learn spatial derivatives with fully interpretable accuracy, avoiding direct kernel differentiation approach used by methods such as Gaussian Processes (\cite{rasmussen2006gaussian}) that is known to cause instabilities (\cite{gingold1977sph},~\cite{bardenhagen2001mpm},~\cite{monaghan2005sph}) in numerical solvers. We present explicit and implicit schemes of derivative extraction from learned fields, each with its own key strengths. We show that KBR is capable of stable integration into conservative 1D hyperbolic PDE solvers where traditional PINN-based learning approaches often saturate. This bridges kernel learning with traditional solvers, a potential step towards simulating PDEs on irregular high-dimensional point clouds while respecting conservation laws.

\section{Methodology}

The original KBR aims to fit an unknown function with a locally quadratic polynomial, but gives us no information about the fitting constants involved. The fit is ensured by a point-wise enforcement of lower order moments, arising from the disagreement between continuous and discrete statistics. The desired local quadratic fit using KBR for some $\pmb{x}\text{ for } \pmb{x} \subset \mathbb{R}^D$ is:
\begin{gather}\label{eq:general}
    \phi(\pmb{x}) = a + \pmb{b} \cdot \pmb{x} + \pmb{C}:\pmb{x}\pmb{x}^{\dagger},
\end{gather}
However, the original scheme fails to impose strict second-order accuracy anywhere except the training points. We address this issue by introducing an improved correction that eliminates this error entirely from the previously unseen points as well (see $\S$~\ref{asec:secendordercorrection} for details). 
The spatial derivatives of the fitted function can then be given by,
\begin{gather}\label{eq:1stder}
    \frac{\partial \phi}{\partial x_\alpha  } = b_\alpha \delta_{\alpha\gamma} + c_{\gamma\kappa}x_\gamma\delta_{\alpha\kappa} + c_{\gamma\kappa}x_\kappa\delta_{\alpha\gamma}, \qquad     \frac{\partial^2 \phi}{\partial x_\alpha \partial x_\beta } = c_{\gamma\kappa}\delta_{\beta\gamma}\delta_{\alpha\kappa} + c_{\gamma\kappa}\delta_{\beta\kappa}\delta_{\alpha\gamma}.  
\end{gather}
where \( \phi'(x) \) denotes the gradient (first derivative), and \( \phi''(x) \) denotes the Laplacian (second derivative). In 1D, the equations simplify to $\frac{\partial \phi}{\partial x} = b+2cx$ and $\frac{\partial^2 \phi}{\partial x^2} = 2c$. The information about the spatial derivatives are present in the fit constants. We present two ways to extract this information: 
\begin{itemize}
    \item The \textbf{explicit} way is to calculate each of these quantities sequentially as closed-form expressions at a test point $x$ (refer to $\S$~\ref{asec:explicit} for details). 
    \item Alternatively in the \textbf{implicit} way, we perturb the test point \(x\) by a small \(\varepsilon\), evaluate the predictions at \(x_0 \pm \varepsilon\), and recover the required local constants implicitly by solving the resulting system of equations (see \(\S\)~\ref{asec:implicit}). This scheme can be expanded to higher dimensions by design.
\end{itemize}

Derivative extraction follows KBR training (with exact second-order correction), with no additional information requirement. We demonstrate the performance of two schemes in 1D using a number of tests below, with the experimentation details given in $\S$~\ref{asec:num_exp}. 

\section{Results}
\textbf{Explicit scheme is more stable for unknown data:} We test the derivative learning schemes on an unknown test data of $5,000$ random points for two 1D functions, namely the Camel function with strong multiplicative coupling and the fully separable Rastrigin function. The general functions are given as follows:
{\small
\begin{equation}\label{eq:camel}
  f_{\text{camel}}(\bm{x}) = \frac{1}{2(k \sqrt{\pi})^{D}} \left( 
  \exp\left(-\sum_{i=1}^D\frac{\left(x_i - \frac{1}{3}\right)^2}{k^2}\right) 
  + \exp\left(-\sum_{i=1}^D\frac{\left(x_i - \frac{2}{3}\right)^2}{k^2}\right) 
  \right)
\end{equation}

\begin{equation}
  f_{\text{ras}}(\bm{x}) = \sum_{i=1}^{D} 
  \left[ x_i^2 - A \cos(2\pi x_i) + A \right]
\end{equation}
}
where \( \bm{x} = (x_1, x_2,......,x_D) \in \mathbb{R}^D \), with $k = 0.2$ and $A = 10$. For the present study, $D=1$. We refer to Fig.~\ref{fig:1d_implicit_vs_explicit}. It is observed that the explicit scheme shows a more stable behavior towards convergence, as compared to the implicit algorithm. For the Camel function, we observe a threshold sampling size beyond which the explicit scheme's performance sharply starts approaching second-order finite difference accuracy. This tendency is less apparent for the mathematically simpler Rastrigin function.
\begin{figure}[h]
    \centering
    \includegraphics[width=0.8\linewidth]{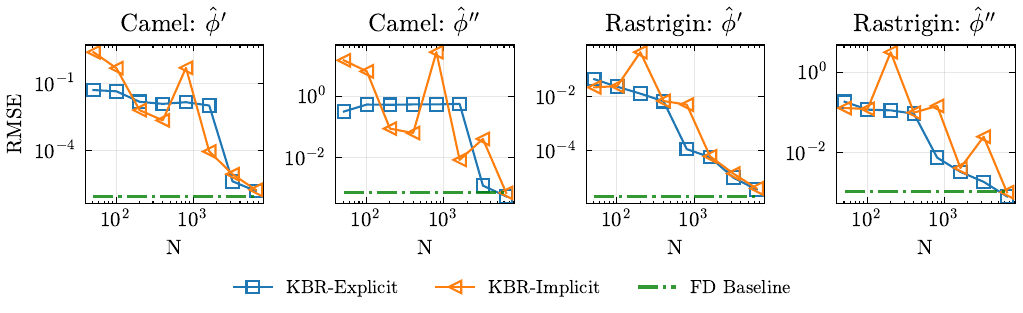}
    \caption{We show convergence of implicit and explicit derivative prediction schemes ($  \hat{\phi}'  $ and $  \hat{\phi}''  $) for two 1D functions with random sampling. RMSE is computed on a random test set of $5,000$ points, benchmarked against second-order non-uniform FD.}
    \label{fig:1d_implicit_vs_explicit}
\end{figure}

\textbf{Benchmarking on clean data:} We then move on to fields with known function values, eliminating the step of learning the function on unseen points. As a simple baseline validation study, we test our scheme against the so called Differential Neural Networks (\cite{Ledesma2020DifferentialNeuralNetworks}), or DNN, from which the exact test conditions can be referred. Tab.~\ref{tab:DNNvsKBR} shows the present KBR scheme outperforming DNN significantly. The second-order accuracy is evident in the $  x^2  $ case, where explicit-scheme derivative errors reach machine precision.

\begin{table}[h]
    \centering
    \footnotesize
    \caption{Comparison of non-normalized MSE (DNN vs KBR) on deployment/test data}
    \begin{tabular}{@{} l c c c c c @{}}
        \toprule
        $\phi(x)$ & DNN & \multicolumn{2}{c}{KBR-Implicit} & \multicolumn{2}{c}{KBR-Explicit} \\
        \cmidrule(lr){3-4} \cmidrule(lr){5-6}
                  & $\phi'$ & $\phi'$ & $\phi''$ & $\phi'$ & $\phi''$ \\
        \midrule
        $\sin(x)$  & $7.5 \times 10^{-6}$ & $6.60 \times 10^{-12}$ & $2.22 \times 10^{-4}$ & $3.12 \times 10^{-12}$ & $6.50 \times 10^{-5}$ \\
        $x^2$      & $1.1 \times 10^{-3}$ & $8.14 \times 10^{-13}$ & $3.27 \times 10^{-4}$ & $3.24 \times 10^{-19}$ & $3.07 \times 10^{-14}$ \\
        $\ln(x)$   & $3.6 \times 10^{-6}$ & $8.92 \times 10^{-12}$ & $2.52 \times 10^{-4}$ & $3.88 \times 10^{-12}$ & $1.34 \times 10^{-4}$ \\
        \bottomrule
    \end{tabular}
    \label{tab:DNNvsKBR}
\end{table}

Convergence is tested on the 1D Camel function with increasing random sample sizes $  N  $, comparing derivative errors against non-uniform FD (\cite{Fornberg1988,LeVeque2007}). The top panel of Fig.~\ref{fig:kbr_fdm_and_noisy} shows stable convergence for our schemes. While FD accuracy degrades on highly irregular grids (especially for $  \phi''  $, \cite{crowder1971errors,dehoog1985rate}), our method performs comparably. Both the theoretically second-order finite-difference (FD) scheme and the present scheme exhibit second-order convergence for $\phi'$ and first-order convergence for $\phi''$.

This scheme is generalizable in higher dimensions. A qualitative implementation result for the 2D Camel function is shown in $\S$~\ref{asec:implicit}.

\textbf{Implicit scheme outperforms explicit scheme for noisy data:} We refer to the bottom panel of Fig.~\ref{fig:kbr_fdm_and_noisy}. The implicit scheme is more robust with a much lesser error growth as the degree of corruption increases in the training data. For the smoothing cubic spline, a pre-determined window length proportional to the noise variance is used as a reference (see $\S$~\ref{asec:num_exp}), whereas the KBR schemes aren't given any such pre-conditioning. It is apparent that the standard FD requires regularization for noisy data and isn't applicable here as is (\cite{chartrand2011numerical}).

\begin{figure}[h]
    \centering
    \small
    \includegraphics[width=0.7\linewidth]{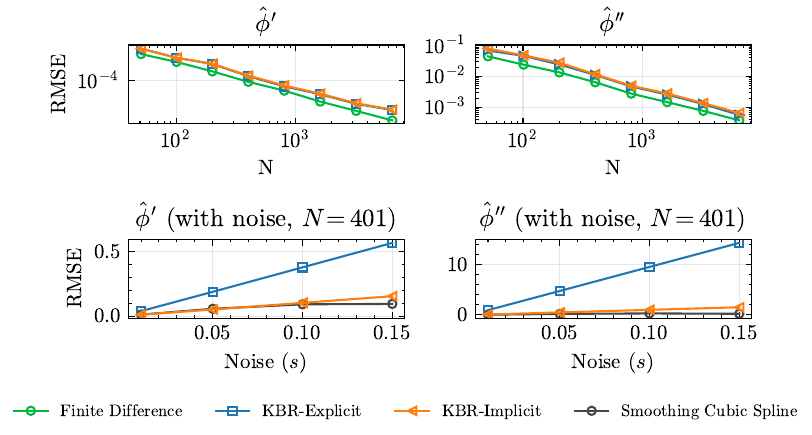}
    \caption{Convergence plot (top panel) comparing errors from KBR-predicted spatial derivatives with second-order FD for the 1D Camel function on a non-uniform grid. Bottom panel shows the errors in extracted derivative errors when the training data is corrupted with varying degrees of gaussian noise. Cubic spline-smoothened results are given as a reference.}
    \label{fig:kbr_fdm_and_noisy}
\end{figure}
\textbf{Application of KBR in PDE Solutions:}
The general conservative form of the hyperbolic PDEs considered in this work is given by
\begin{equation}
\footnotesize
\frac{\partial \phi}{\partial t} + \frac{\partial F(\phi)}{\partial x} = 0,
\end{equation}
where $\phi$ denotes the solution field and $F(\phi)$ represents the corresponding flux. We integrate KBR into conservative hyperbolic PDE solvers by replacing conventional flux evaluations with KBR-based predictions, thereby promoting consistency with fundamental conservation laws beyond heuristic enforcement. The single trainable parameter $\theta$ may be retrained at every time step or periodically over multiple steps. Training is performed using grid-point data, while cell interfaces are treated as deployment locations for flux prediction owing to minimal training cost. The proposed approach is evaluated on two benchmark problems to assess the integration stability: the one-dimensional inviscid Burgers’ equation with a Riemann-type initial condition on a uniform grid, and the one-dimensional Euler equations for the Sod shock tube problem (\cite{sod1978survey}) on a non-uniform grid. Implementation details are provided in $\S$~\ref{asec:num_exp}. 

\begin{figure}[h]
    \centering
    \includegraphics[width=0.7\linewidth]{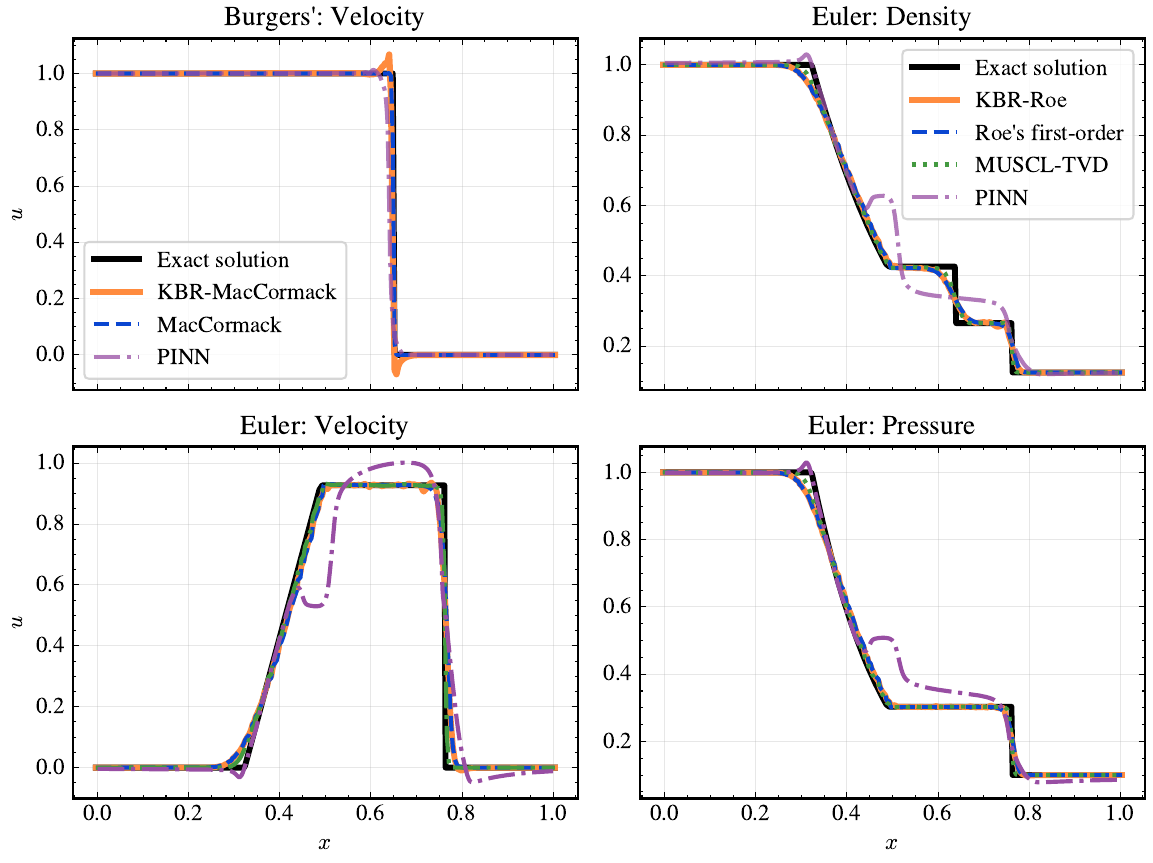}
    \caption{The figure shows the numerical solution at time $t=0.3$, and $t = 0.15$ for 1D inviscid Burgers' and 1D Euler equations respectively, using strict FD and KBR-integrated schemes on a grid size of 251 points. Trained PINN predictions and exact solutions are given as a reference.}
    \label{fig:PDEs}
\end{figure}

\begin{table}[h]
\caption{Shock-capturing metrics for density $\rho$ in the Sod shock tube problem on a non-uniform grid (KBR-Roe, Roe first-order, MUSCL-TVD). 
Region~1: shock $[0.62, 0.67]$, post-shock $[0.64, 0.72]$; 
Region~2: shock $[0.73, 0.77]$, post-shock $[0.755, 0.80]$.}
\centering
\tiny
\renewcommand{\arraystretch}{1.3}
\begin{tabular}{llrrrr}
\toprule
Region &Metrics & KBR-Roe & Roe & MUSCL-TVD \\
\midrule
\multirow{6}{*}{Shock 1 (Region 1)} 
& L1 Error (Global)          & $9.409\times10^{-3}$ & $1.014\times10^{-2}$ & $5.311\times10^{-3}$ \\
& L$\infty$ Error (Global)   & $9.305\times10^{-2}$ & $9.021\times10^{-2}$ & $8.614\times10^{-2}$ \\
& Shock Thickness   & $2.874\times10^{-2}$ & $3.221\times10^{-2}$ & $2.450\times10^{-2}$ \\
& Post-shock Osc.   & $4.919\times10^{-2}$ & $5.162\times10^{-2}$ & $4.392\times10^{-2}$ \\
& Total Variation   & $1.010\times10^{-1}$ & $9.646\times10^{-2}$ & $1.265\times10^{-1}$ \\
\midrule
\multirow{6}{*}{Shock 2 (Region 2)} 
& L1 Error          & $9.409\times10^{-3}$ & $1.014\times10^{-2}$ & $5.311\times10^{-3}$ \\
& L$\infty$ Error   & $9.305\times10^{-2}$ & $9.021\times10^{-2}$ & $8.614\times10^{-2}$ \\
& Shock Thickness   & $3.087\times10^{-2}$ & $1.638\times10^{-2}$ & $7.641\times10^{-3}$ \\
& Post-shock Osc.   & $6.858\times10^{-2}$ & $7.641\times10^{-2}$ & $5.433\times10^{-2}$ \\
& Total Variation   & $1.067\times10^{-1}$ & $7.656\times10^{-2}$ & $1.295\times10^{-1}$ \\
\bottomrule
\end{tabular}
\label{tab:sod_density_metrics}
\end{table}

The KBR schemes stay stable across both PDEs (Fig.~\ref{fig:PDEs}), though standard Gibbs oscillations (\cite{Wilbraham1848}) appear for the Burgers' equation. This is because unlike the standard MacCormack, the KBR-integrated scheme uses a predicted second-order flux value at the cell faces. For the Euler equations, the KBR-integrated first-order Roe scheme is shown to achieve performance comparable to the standard Roe scheme in terms of shock resolution and error metrics (Table~\ref{tab:sod_density_metrics}), with no error blow-ups over time. The results are also compared with those obtained using the higher-order flux-limited MUSCL–TVD scheme (\cite{laney1998computational}), though a superiority is not expected given the order of MUSCL–TVD. Note that the variation in the observation window does not change relative trends significantly. The incorporation of KBR within TVD (Total Variation Diminishing) and WENO (Weighted Essentially Non-Oscillatory) schemes is currently under investigation.

In the examples considered, training of KBR is significantly faster than PINNs, using lesser computing resources. The objective of this demonstration is not to establish superiority over standard numerical schemes, but to illustrate the dynamic stability and competitive behavior of the proposed learning-incorporated approach. Integrating learning-based components with traditional numerical PDE solvers in this manner may provide an additional framework for robust spatial derivative evaluation, particularly for high-dimensional problems and unstructured grids.

\section{Conclusion and Outlook}
We have presented two spatial derivative learning schemes as an expansion of KBR, applicable in known, unknown and corrupt fields primarily in 1D and preliminary in 2D. Termed here as implicit and explicit, both the schemes achieve second-order FD accuracy for derivative learning while remaining reasonably robust to noise. Quantitative evaluation using the underlying theory and convergence rates provides a principled measure of performance, thereby enhancing the interpretability of the learning scheme.
The presented KBR-integrated PDE simulations act as a step towards incorporating machine learning into PDE solvers in a manner that adheres to the basic conservation laws in a non-heuristic way.
Future work targets improved implicit scheme stability and extension to high-dimensional PDE simulations in random point-clouds. The relevant scripts along with the result datasets are available at 
\url{https://github.com/abhishekganguly808/kbr-derivatives}.


\subsubsection*{Acknowledgments}
A. G. thanks Guruprasad S for all the meaningful discussions on the numerical solutions of hyperbolic PDEs.
\newpage

\bibliography{no_doi}
\bibliographystyle{iclr2026_conference}

\newpage
\appendix
\section{Appendix}
\subsection{Exact Second-Order Correction}\label{asec:secendordercorrection}

The original KBR self-correction yields interpolated second-moment errors on test points, and the second-order accuracy on unseen points is no exact in the original work. We prove exact second-order accuracy via targeted error removal, and address this shortcoming.

\begin{figure}[h]
    \centering
    \includegraphics[width=0.7\linewidth]{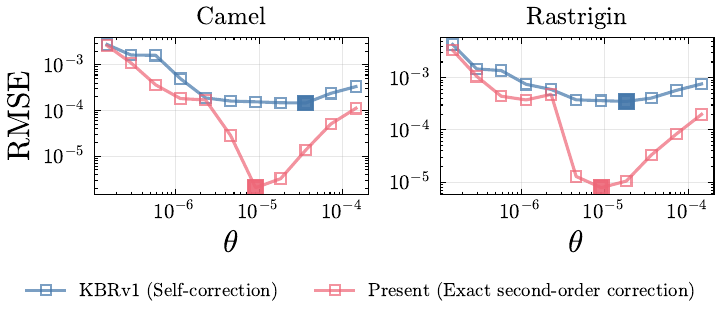}
    \caption{Comparison between the loss function landscapes for the case of self-correction and the proposed exact second-order correction, with respect to the single parameter $\theta$ and validation error in KBR. Training data consists of $501$ randomly sampled points, with an 80:20 split between train and validation.}
    \label{afig:energy_minima_camel}
\end{figure}

\begin{lem}[Exact Second-Order KBR]
Let $\{x_i,\phi(x_i)\}_{i=1}^N$ be a set of training points, For fully converged Lagrange multipliers $\tilde{x}$, define $\tilde{\xi}_i = x_i - \tilde{x}$. The corrected second-order prediction,
\begin{equation}\label{eq:new_2nd_corr}
\hat{\phi}^{(2)}(x) = \sum_i \phi(x_i) P(\|\tilde{\xi}_i\|,\theta) 
- \Theta(x) \sum_i \frac{\phi(x_i)-\phi^{(1)}(x)}{\Theta(x_i)} P(\|\tilde{\xi}_i\|,\theta),
\end{equation}
satisfies $\phi^{(2)}(x) = \phi(x)$ exactly for quadratic $\phi(x) = a+bx+cx^2$ at test points $x\notin\{x_i\}$.
\end{lem}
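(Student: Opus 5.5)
The plan is to reduce the statement to a moment identity for the converged KBR weights. I will use three facts from the KBR construction of \cite{Ganguly_2025}, taken as given. First, for fully converged Lagrange multipliers $\tilde{x}$ the weights $P_i := P(\|\tilde{\xi}_i\|,\theta)$ are normalised, $\sum_i P_i = 1$. Second, they reproduce the first moment at the query point, $\sum_i P_i x_i = x$, equivalently $\sum_i P_i (x_i - x) = 0$. Third, the first-order prediction is $\phi^{(1)}(x) = \sum_i \phi(x_i) P_i$, which is the first term of \eqref{eq:new_2nd_corr}. I take $\Theta$ to be the second-moment defect that the self-correction is meant to remove, and the argument treats it as the quantity $\Theta(x) = \sum_i P_i (x_i - x)^2$ evaluated with the weights attached to the point in question.

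\textbf{Step 1: expand the first-order prediction.} Write $\phi(x_i) = a + b x_i + c x_i^2$ and set $x_i = x + \delta_i$ with $\delta_i = x_i - x$. Normalisation and the first-moment constraint kill the constant and linear parts. This gives $\phi^{(1)}(x) = \phi(x) + c\,\Theta(x)$, so the whole first-order error for a quadratic equals $c$ times the second-moment defect. The lemma then reduces to a single target: the subtracted term in \eqref{eq:new_2nd_corr} must equal exactly $c\,\Theta(x)$. Equivalently,
\begin{equation*}
\sum_i \frac{\phi(x_i)-\phi^{(1)}(x)}{\Theta(x_i)}\, P_i = c .
\end{equation*}

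\textbf{Step 2: evaluate the residual ratio.} Substitute the expansion from Step 1 into $\phi(x_i)-\phi^{(1)}(x)$. This gives $b\delta_i + c(\delta_i^2 + 2x\delta_i) - c\,\Theta(x)$. Split the weighted sum into three parts: the pieces linear in $\delta_i$, the piece $c\delta_i^2$, and the piece $-c\,\Theta(x)$. The aim is to show, using the way $\Theta(x_i)$ is defined at the training nodes, that the linear pieces sum to zero. The remaining terms should then collapse to $c\sum_i P_i = c$. The point of dividing by the per-node defect $\Theta(x_i)$ is to normalise each residual so that its quadratic part becomes the curvature $c$ itself. This is what removes the interpolated error that the original self-correction leaves at unseen points: there the ratio is only approximately $c$ and gets interpolated. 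Finally, multiply by $\Theta(x)$ and subtract from $\phi^{(1)}(x)$ to get $\hat{\phi}^{(2)}(x) = \phi(x)$ for every $x \notin \{x_i\}$. Nothing in the argument uses $x$ being a training point, which is the content of the lemma.

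\textbf{Main obstacle.} The delicate step is Step 2: making the cancellation of the linear-in-$\delta_i$ terms rigorous, and showing that the ratio with $\Theta(x_i)$ in the denominator gives exactly $c$. Which of these holds depends on the precise definition of $\Theta$ at training nodes, and whether it is built from the converged $\tilde{x}$ at those nodes or at the query point.

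I would first try the reading in which $\Theta(x_i)$ is the defect of the node's own KBR stencil. In that reading $(\phi(x_i)-\phi^{(1)}(x_i))/\Theta(x_i) = -c$ holds exactly by Step 1 applied at $x_i$, so each summand is a pointwise exact curvature estimate. The full sum then equals $c$ by normalisation; the sign convention for $\Theta$ has to be tracked here so that the correction subtracts $+c\,\Theta(x)$.

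If instead the formula genuinely uses $\phi^{(1)}(x)$ in the numerator, I would rewrite it as $\phi^{(1)}(x) = \phi^{(1)}(x_i) + [\phi^{(1)}(x)-\phi^{(1)}(x_i)]$. I would then show the bracket contributes zero after weighting. For that I would use the first-moment constraint $\sum_i P_i(x_i - x) = 0$ together with the fact that $\phi^{(1)}$ is itself exactly linear-plus-$c\Theta$ on quadratics.
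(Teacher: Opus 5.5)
Your Step 1 is correct and is the same reduction the paper uses: normalisation and first-moment consistency give $\phi^{(1)}(x)=\phi(x)+c\,\Theta(x)$, so everything rests on the weighted residual sum in Step 2. That step is a genuine gap, and for the formula as stated it cannot be closed.

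On your question about $\Theta$ at the nodes: the paper builds it from the \emph{query point's} weights, $\Theta(x_k)=\sum_j x_j^2P_j-x_k^2$ (likewise in Algorithm~1). With that definition your $2cx\delta_i$ belongs with the curvature part, since $c(\delta_i^2+2x\delta_i)-c\,\Theta(x)=-c\,\Theta(x_i)$ exactly. The residual is therefore $\phi(x_i)-\phi^{(1)}(x)=b\,\delta_i-c\,\Theta(x_i)$, and
\[
\sum_i \frac{\phi(x_i)-\phi^{(1)}(x)}{\Theta(x_i)}\,P_i=-c+b\sum_i\frac{P_i\,\delta_i}{\Theta(x_i)}.
\]
Both pieces disagree with your target $+c$. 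The curvature part has the opposite sign. And $\sum_iP_i\delta_i=0$ gives no control over $\sum_iP_i\delta_i/\Theta(x_i)$ once the denominator varies with $i$.

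A minimal check: take nodes $0,2$ and query $x=1$. Any symmetric kernel gives $P_i=\tfrac12$ with $\tilde x=x$ already converged, and then $\Theta(1)=1$, $\Theta(0)=2$, $\Theta(2)=-2$. The stated formula returns $3/2$ for $\phi(y)=y$ and $3$ for $\phi(y)=y^2$, instead of $1$ in both cases.

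The paper's proof reaches $c\,\Theta(x)$ only by two slips. It writes $\phi(x_i)-\phi^{(1)}(x)$ as $c\bigl(x_i^2-(x_i^2+\Theta(x_i))\bigr)$, which drops $b(x_i-x)$. It then evaluates $\sum_i(-1)P_i$ as $+1$. Algorithm~1 \emph{adds} the correction, which repairs the curvature sign, but the $b$-term survives: it returns $1/2$ for $\phi(y)=y$ in the example. So the obstacle you flagged is real, and it is exactly where the paper's own argument breaks.

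Your fallback readings do not rescue the statement as written.

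Reading~1 is a correct argument, but for a different estimator. There the numerator $\phi(x_i)-\phi^{(1)}(x_i)$ and the defect $\Theta(x_i)$ are both taken from node $i$'s own converged stencil, so every ratio is exactly $-c$. Exactness then requires $\hat\phi^{(2)}(x)=\phi^{(1)}(x)+\Theta(x)\sum_i(\cdots)P_i$. No sign convention for $\Theta$ can supply that plus sign. The correction depends on $\Theta$ only through the ratios $\Theta(x)/\Theta(x_i)$, which are invariant under $\Theta\mapsto-\Theta$, so the sign in front of the correction itself has to change.

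Reading~2 needs $\sum_iP_i\bigl[\phi^{(1)}(x)-\phi^{(1)}(x_i)\bigr]/\Theta(x_i)=0$. This already fails for linear $\phi$: there the bracket is $-b\,\delta_i$, and you are back to the reweighted first moment that the constraint does not control.

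If you want a provable lemma, state and prove the Reading~1 version with the plus sign, assuming $\Theta(x_i)\neq 0$ for all $i$.
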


\begin{proof}
Define second-moment errors:\\
$\Theta(x) = \sum_i x_i^2 P(\|\tilde{\xi}_i\|,\theta) \, - x^2$ and $\Theta(x_k) =  \sum_i x_i^2 P(\|\tilde{\xi}_i\|,\theta) - x_k^2$, for $k\in i$. 

Converged first moments give exact first order prediction, $\hat{x}_i = x_i$. The original correction therefore expands as:
\begin{align}
\phi^{(2)}(x) &= \sum_i \bigl(2\phi(x_i) - \phi^{(1)}(x_i)\bigr) P(\|\tilde{\xi}_i\|,\theta) \notag \\
&= a + bx + c\sum_i (2x_i^2 - \widehat{x^2}_i) P(\|\tilde{\xi}_i\|,\theta) \notag\\
&= \phi(x) + c\,\bigl(\Theta(x) - \hat{\Theta}(x)\bigr)
\end{align}
where $\hat{\Theta}(x) = \sum_i \Theta(x_i) P(\|\tilde{\xi}_i\|,\theta)$ are the interpolated training errors from $\Theta(x_i)$, which fails to cancel out the thermal error at $x$. Also, $\widehat{x^2_i} = \sum_i x_i^2 P(\|\tilde{\xi}_i\|,\theta) $

For Eq.~\ref{eq:new_2nd_corr}, the correction term simplifies:
\begin{align}
&\Theta(x) \sum_i \frac{\phi(x_i)-\phi^{(1)}(x)}{\Theta(x_i)} P(\|\tilde{\xi}_i\|,\theta) \notag\\
&= c\,\Theta(x) \sum_i \frac{x_i^2 - (x_i^2 + \Theta(x_i))}{\Theta(x_i)} P(\|\tilde{\xi}_i\|,\theta) = c\,\Theta(x)
\end{align}
Thus, $\phi^{(2)}(x) = \sum_i \phi(x_i) P(\|\tilde{\xi}_i\|,\theta) - c \, \Theta(x) = \phi(x)$.
\end{proof}

This ensures exact second-order representation on unseen points locally. The immediate effect can be seen in Fig.~\ref{afig:energy_minima_camel}, where the current corrections lead to a significantly lower validation error with respect to the trainable kernel parameter $\theta$.

\begin{algorithm}[H]
\caption{Exact second-order correction KBR}
\label{alg:kbr_explicit}
\begin{algorithmic}[1]

\Require Training data $\{x_i,\phi(x_i)\}_{i=1}^N$, test points $\{x\}$, kernel parameter $\theta$, max iterations $K$

\Ensure Second-order prediction $\hat{\phi}(x)$

\For{each test point $x$}

    \State $\tilde{x} \leftarrow$ LagrangeMultiplier$(x,\{x_i\},\theta,K)$  \hfill $\triangleright$ Enforce moment constraint

    \For{$i=1$ to $N$}
        \State $P_i \leftarrow P(\|x_i-\tilde{x}\|,\theta)$
    \EndFor

    \State $\phi^{(1)}(x) \leftarrow \sum_i \phi(x_i)P_i$  \hfill $\triangleright$ First-order prediction

    \State $\Theta(x) \leftarrow \sum_i (x_i^2-x^2)P_i$  \hfill $\triangleright$ Second moment error

    \For{$i=1$ to $N$}
        \State $\Theta(x_i) \leftarrow \sum_j (x_j^2-x_i^2)P_j$
    \EndFor

    \For{$i=1$ to $N$}
        \State $c_i \leftarrow \dfrac{\phi(x_i)-\phi^{(1)}(x)}{\Theta(x_i)}$
    \EndFor

    \State $\hat{\phi}(x) \leftarrow \phi^{(1)}(x)+\Theta(x)\sum_i c_i P_i$  \hfill $\triangleright$ Second-order correction

\EndFor

\end{algorithmic}
\end{algorithm}

\subsection{Explicit scheme for derivative calculation}
\label{asec:explicit}

\textbf{Laplacian Calculation:}
We assume that the local field in the neighborhood of a test point $x_0$
is well represented by the quadratic polynomial $\phi(x) = a + bx + cx^2$, the exact second derivative is then $\phi''(x_0)=2c$. Consider the uncorrected prediction at $x_0$,
\begin{align}
\phi^{(0)}(x_0)
&= \sum_i \phi(x_i) P(\|\xi_i\|,\theta)
= a + b \hat{x}_0 + c \sum_i x_i^2 P(\|\xi_i\|,\theta),
\end{align}
where $\hat{x}_0=\sum_i x_i P(\|\xi_i\|,\theta)$.
Rewriting,
\begin{equation}
\phi^{(0)}(x_0)
=
\phi(x_0)
+
b(\hat{x}_0-x_0)
+
c\,\Theta^{(0)}(x_0),
\end{equation}

After enforcing first-moment consistency via Lagrange multipliers,
$\hat{x}_0=x_0$, and the first-order corrected prediction becomes
\begin{equation}
\phi^{(1)}(x_0)
=
\sum_i \phi(x_i) P(\|\tilde{\xi}_i\|,\theta)
=
\phi(x_0)+c\,\Theta(x_0),
\end{equation}
where $\Theta$ is defined in the proof of the previous section  .

Subtracting the two predictions isolates the quadratic contribution,
\begin{equation}
\phi^{(0)}(x_0)-\phi^{(1)}(x_0)=c\,\Theta(x_0).
\end{equation}
Since $\phi''(x_0)=2c$, the Laplacian is obtained as
\begin{equation}
\phi''(x_0)
=
\frac{2}{\Theta(x_0)}
\bigl[\phi^{(0)}(x_0)-\phi^{(1)}(x_0)\bigr].
\end{equation}
If the field value $\phi(x_0)$ is already known, it may be used directly in
place of $\phi^{(1)}(x_0)$.

\medskip
\textbf{Gradient Calculation:}
Using the same quadratic representation,
$\phi(x)=a+bx+cx^2$, the exact gradient at $x_0$ is
\begin{equation}
\phi'(x_0)=b+2cx_0.
\end{equation}

The uncorrected prediction can again be written as
\begin{equation}
\phi^{(0)}(x_0)
=
\phi(x_0)
+
b(\hat{x}_0-x_0)
+
c\,\Theta^{(0)}(x_0),
\end{equation}
while the first-moment corrected prediction satisfies
\begin{equation}
\phi^{(1)}(x_0)=\phi(x_0)+c\,\Theta(x_0).
\end{equation}

Subtracting the two expressions yields
\begin{equation}
\phi^{(0)}(x_0)-\phi^{(1)}(x_0)
=
b(\hat{x}_0-x_0)
+
c\bigl(\Theta^{(0)}(x_0)-\Theta(x_0)\bigr).
\end{equation}
The gradient is recovered as
\begin{equation}
\phi'(x_0)
=
\frac{1}
{\hat{x}_0-x_0} \left(\phi^{(0)}(x_0)-\phi^{(1)}(x_0)
-
\frac{\phi''(x_0)}{2}
\bigl[\Theta^{(0)}(x_0)-\Theta(x_0)\bigr]\right).
\end{equation}

\begin{algorithm}[H]
\caption{Explicit scheme for KBR derivatives}
\label{alg:kbr_explicit_derivative}
\begin{algorithmic}[1]

\Require Training data $\{x_i,\phi(x_i)\}_{i=1}^N$, test points $\{x\}$, kernel parameter $\theta$, max iterations $K$

\Ensure Gradient $\phi'(x)$ and Laplacian $\phi''(x)$

\For{each test point $x$}

    \State $\tilde{x} \leftarrow$ LagrangeMultiplier$(x,\{x_i\},\theta,K)$
    \hfill $\triangleright$ Enforce first moment

    \For{$i=1$ to $N$}
        \State $P_i \leftarrow P(\|x_i-\tilde{x}\|,\theta)$
        \State $P_i^{(0)} \leftarrow P(\|x_i-x\|,\theta)$
    \EndFor

    \State $\phi^{(0)}(x) \leftarrow \sum_i \phi(x_i)P_i^{(0)}$
    \hfill $\triangleright$ Uncorrected prediction

    \State $\phi^{(1)}(x) \leftarrow \sum_i \phi(x_i)P_i$
    \hfill $\triangleright$ Corrected prediction

    \State $\Theta^{(0)}(x) \leftarrow \sum_i (x_i^2-x^2)P_i^{(0)}$

    \State $\Theta(x) \leftarrow \sum_i (x_i^2-x^2)P_i$

    \State $\phi''(x) \leftarrow \dfrac{2\bigl(\phi^{(0)}(x)-\phi^{(1)}(x)\bigr)}{\Theta(x)}$
    \hfill $\triangleright$ Laplacian

    \State $\hat{x} \leftarrow \sum_i x_i P_i^{(0)}$

    \State $\phi'(x) \leftarrow
    \dfrac{1}{\hat{x}-x}
    \left(
    \phi^{(0)}(x)-\phi^{(1)}(x)
    -
    \dfrac{\phi''(x)}{2}
    \bigl[\Theta^{(0)}(x)-\Theta(x)\bigr]
    \right)$
    \hfill $\triangleright$ Gradient

\EndFor

\end{algorithmic}
\end{algorithm}

\subsection{Implicit scheme for derivative calculation}\label{asec:implicit}

Consider the quadratic fit is given as
\begin{equation}
\phi(x) = a + bx +cx^2
\end{equation}
For a point $x_0$ of interest, three predictions can be made using slight perturbations in the Lagrange multipliers $\tilde{\xi}_i$, which goes into the kernel function $P$. It can be represented as:
\begin{gather}
    \tilde{\xi}_i = x - \tilde{x}_0 , \notag \\
    \tilde{\xi}^-_i = x - (\tilde{x}_0 - \varepsilon ), \notag \\
    \tilde{\xi}^+_i = x - (\tilde{x}_0 +\varepsilon ),
\end{gather}
where, $\varepsilon$ can be a small percentage of effective distance between the points, to preserve the local nature of derivative calculation. The problem can then be represented as:
\[
\underbrace{
\begin{bmatrix}
\sum_i P(\tilde{\xi}_i) & \sum_i x_i P({\tilde{\xi}}_i) & \sum_i x^2_i P({\tilde{\xi}}_i) \\
\sum_i P(\tilde{\xi}^-_i) & \sum_i x_i P(\tilde{\xi}^-_i) & \sum_i x_i^2 P(\tilde{\xi}^-_i)\\
\sum_i P(\tilde{\xi}^+_i) & \sum_i x_i P(\tilde{\xi}^+_i) & \sum_i x_i^2 P(\tilde{\xi}^+_i) 
\end{bmatrix}}_{A}
\underbrace{
\begin{bmatrix}
a \\ b \\ c
\end{bmatrix}}_{X}
=
\underbrace{
\begin{bmatrix}
\hat{\phi}(x_0) \\ \hat{\phi}^+(x_0) \\ \hat{\phi}^-(x_0)
\end{bmatrix}}_{B}
\]
In the present study, $\varepsilon$ is chosen as $0.025\sqrt{\theta_o}$, where $\theta_o$ denotes the optimal kernel parameter obtained from the relation $\theta_o/d_{\text{typ}}^2 = k$. The dimensionless parameter $k$ is optimized instead of $\theta$. Here, $d_{\text{typ}}$ represents the effective local distance, evaluated as the average $k$-nearest-neighbor distance using five points. Empirically, a sweep of $10$–$15$ iterations over $k \in [0.01,100]$ is found to yield favorable local minima of the loss function. 

\begin{algorithm}[H]
\caption{Implicit scheme for KBR derivatives}
\label{alg:kbr_implicit_matrix}
\begin{algorithmic}[1]

\Require Training data $\{x_i,\phi(x_i)\}_{i=1}^N$, test points $\{x\}$, kernel parameter $\theta$, max iterations $K$

\Ensure Gradient $\phi'(x)$ and Laplacian $\phi''(x)$

\For{each test point $x$}

    \State $\tilde{x} \leftarrow$ LagrangeMultiplier$(x,\{x_i\},\theta,K)$
    \hfill $\triangleright$ Moment consistency

    \State $\tilde{x}^- \leftarrow \tilde{x}-\varepsilon$, \quad $\tilde{x}^+ \leftarrow \tilde{x}+\varepsilon$
    \hfill $\triangleright$ Perturbation

    \For{$i=1$ to $N$}
        \State $P_i \leftarrow P(\|x_i-\tilde{x}\|,\theta)$
        \State $P_i^- \leftarrow P(\|x_i-\tilde{x}^-\|,\theta)$
        \State $P_i^+ \leftarrow P(\|x_i-\tilde{x}^+\|,\theta)$
    \EndFor

    \State Assemble matrix $A$:
    \[
    A=
    \begin{bmatrix}
    \sum_i P_i & \sum_i x_i P_i & \sum_i x_i^2 P_i \\
    \sum_i P_i^- & \sum_i x_i P_i^- & \sum_i x_i^2 P_i^- \\
    \sum_i P_i^+ & \sum_i x_i P_i^+ & \sum_i x_i^2 P_i^+
    \end{bmatrix}
    \]

    \State Assemble vector $B$:
    \[
    B=
    \begin{bmatrix}
    \hat{\phi}(x)\\
    \hat{\phi}^-(x)\\
    \hat{\phi}^+(x)
    \end{bmatrix}
    \]

    \State Solve $AX=B$ for $X=[a\;b\;c]^T$
    \hfill $\triangleright$ Local quadratic fit

    \State $\phi''(x)\leftarrow 2c$
    \hfill $\triangleright$ Laplacian

    \State $\phi'(x)\leftarrow b + 2cx$
    \hfill $\triangleright$ Gradient

\EndFor

\end{algorithmic}
\end{algorithm}

\begin{figure}[h!]
    \centering
    \includegraphics[width=\linewidth]{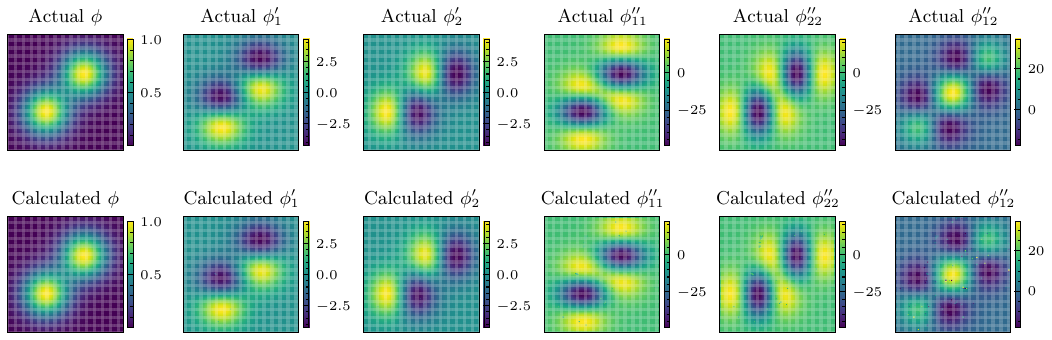}
    \caption{The actual and predicted fields, gradients, Laplacian and Hessian is shown for the 2D Camel function on a uniform $101\times 101$ grid using KBR Implicit spatial derivative scheme.}
    \label{afig:camel_2d}
\end{figure}

\textbf{2D Formulation}: Let us take Eq.~\ref{eq:general}. At a point of interest $\pmb{x} = (x_1, x_2)$, the gradients, Laplacian and Hessian can be represented as follows:
\begin{equation}
\mathbf{x} =
\begin{bmatrix}
x \\
y
\end{bmatrix},\quad
\mathbf{b} =
\begin{bmatrix}
b_1 \\
b_2
\end{bmatrix},\quad
\mathbf{C} =
\begin{bmatrix}
c_{11} & c_{12} \\
c_{12} & c_{22}
\end{bmatrix},
\end{equation}

\begin{equation}
\nabla \phi = \pmb{b} + 2 \, \pmb{C}\pmb{x},
\end{equation}

\begin{equation}
\nabla^2 \phi = 2\,\pmb{C}.
\end{equation}
We can generate a response matrix using perturbations in the test point along principle directions, i.e., $(x_1\pm\varepsilon, x_2\pm \varepsilon)$, along with correction-invoked and uncorrected predictions at point $\pmb{x}$. A qualitative result for the 2D Camel function (Eq.~\ref{eq:camel}) is given in Fig.~\ref{afig:camel_2d}.

\subsection{Numerical experiment details}\label{asec:num_exp}

\textbf{Training splits and Error:} The train--validation split is set to $90{:}10$ for derivative evaluations. 
The input field is normalized to $[-1,1]$ using $\max|\phi(x)|$, and the spatial grid is normalized to $[0,1]$ unless stated otherwise, in order to maintain a consistent reference frame. 
The RMSE is normalized accordingly and defined as
\begin{equation}
\mathrm{RMSE}
=
\frac{1}{\max|\phi(x)|}
\Bigl\langle\bigl(\hat{\phi}(x)-\phi(x)\bigr)^2\Bigr\rangle^{1/2},
\end{equation}
where $\hat{\phi}(x)$ and $\phi(x)$ denote the predicted and exact field values, respectively.
Additional error metrics used for the analysis of 1D Euler equation are tabulated in Tab.~\ref{tab:shock_metrics_definitions}.

\begin{table}[h]
\caption{Metrics used for shock-capturing evaluation in the Sod shock tube problem.}
\centering
\footnotesize
\renewcommand{\arraystretch}{1.3}
\begin{tabular}{lp{8cm}}
\toprule
Metric                           & Description \\
\midrule
L1 error                         & Mean absolute deviation from exact solution  \\
L$\infty$ error             & Maximum absolute deviation \\
Shock thickness                    & 10\%–90\% transition width of the discontinuity (measures smearing) \\
Post-shock oscillation            & Maximum deviation in the post-shock region  \\
Total variation                        & Sum of absolute differences in the shock window  \\
\bottomrule
\end{tabular}
\label{tab:shock_metrics_definitions}
\end{table}

\textbf{Cubic smoothing spline:} The smoothing factor $w$ for the cubic spline is selected according to the heuristic $w \approx N \sigma^2$, with $\sigma = s/3$. This approximates the noise standard deviation induced by the multiplicative Gaussian perturbation with scale $s$, where $s = \% \,\text{noise}/100$. This keeps the corrupted function value at $\phi(x) \pm s\phi(x)$.

\textbf{KBR integration into PDE solvers:} For the PDE section of the results, the inviscid Burgers' equation and the Euler equation are solved in 1D. Tables~\ref{tab:pde_ic_bc_summary} and~\ref{tab:pinn_architecture} contain the details about the problem setups and the corresponding PINN hyperparameters, respectively.

\begin{figure}[h]
    \centering
    \includegraphics[width=0.45\linewidth]{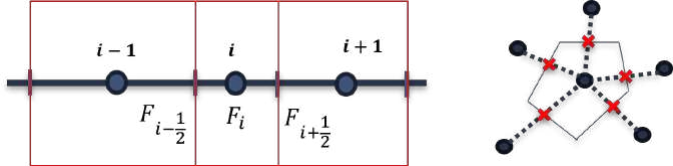}
    \caption{Schematic of (left) a one-dimensional non-uniform grid with interface flux locations, and (right) a 2D Voronoi diagram representation constructed from scattered points, representing an unstructured control volume discretization.}
    \label{afig:grids}
\end{figure}

For the inviscid Burger's equation, we use the conservative MacCormack scheme~(\cite{laney1998computational}). 
\[
\frac{\partial u}{\partial t} + \frac{\partial F(u)}{\partial x}=0,
\]
Where, $F(u) = \frac{1}{2}u^2$, is the flux. The MacCormack predictor-corrector scheme for a non-uniform grid (see Fig.~\ref{afig:grids}) is given by,
\begin{itemize}
    \item Predictor:
    \begin{equation}
U_i^{*} = U_i^n - \frac{\Delta t}{x_{i+1}-x_i}\left(F_{i+1}^n - F_i^n\right),
\end{equation}
    \item Corrector:
\begin{equation}
U_i^{n+1} = \frac{1}{2} \left[U_i^n + U_i^{*} - \frac{\Delta t}{x_i - x_{i-1}}
\left(
F_i^{*}-F_{i-1}^{*}
\right)
\right].
\end{equation}
\end{itemize}

The KBR implementation uses a finite volume-style strategy, using predicted flux value at cell interfaces. The equations then become:
\begin{itemize}
    \item Predictor:
    \begin{equation}
U_i^{*}=U_i^n-\frac{\Delta t}{x_{i+\frac{1}{2}}-x_i}\left(\hat{F}_{i+\frac{1}{2}}^n - F_i^n\right),
\end{equation}
    \item Corrector:
\begin{equation}
U_i^{n+1}= \frac{1}{2}\left[ U_i^n+U_i^{*}-\frac{\Delta t}{x_{i}-x_{i-\frac{1}{2}}}\left(F_i^{*}-\hat{F}_{i-\frac{1}{2}}^{*}\right)\right],
\end{equation}
\end{itemize}
Where, $\hat{F}_{i+\frac{1}{2}}$ and $\hat{F}_{i-\frac{1}{2}}^{*}$ are predicted fluxes at the interfaces. It can be easily simplified for the uniform grid here. In high dimensional problems with unstructured grids (see Fig.~\ref{afig:grids}), integrating the KBR learning scheme in this manner would allow to keep the schemes strictly conservative in the domain.  

The 1D Euler equation is solved using Roe's first order scheme (\cite{laney1998computational}). In a traditional sense, it uses an average central flux along with a dissipative component. 
At a point $x = x_{i-\frac{1}{2}}$, the average central flux is 
$\frac{1}{2}\left( F(x_{i-1}) + F(x_{i}) \right)$. 
The KBR-integrated scheme however does not perform this mean averaging and 
uses the predicted central flux $\hat{F}_{x-\frac{1}{2}}$ at the interface instead.

\begin{table}[h]
\centering
\renewcommand{\arraystretch}{1.1}
\small
\caption{Conservative form, initial conditions and boundary conditions for the numerical solution of the tested PDEs.}
\begin{tabular}{lccc}
\hline
\textbf{PDE} & \textbf{Conservative form} & \textbf{IC ($t=0$)} & \textbf{BC} \\
\hline
Burgers' &
$\dfrac{\partial u}{\partial t} + \dfrac{\partial}{\partial x}\left(\dfrac{u^2}{2}\right) = 0$
&
$u = \begin{cases}
1 & x<0.5 \\
0 & x\ge0.5
\end{cases}$
& $u(0,t)=1$ \\
& & & $u(1,t)= 0$ \\
\hline
Euler &
$\dfrac{\partial}{\partial t}
\begin{pmatrix} \rho \\ \rho u \\ \rho E \end{pmatrix}
+
\dfrac{\partial}{\partial x}
\begin{pmatrix} \rho u \\ \rho u^2 + p \\ u(\rho E + p) \end{pmatrix}
= \mathbf{0}$
&
$\begin{pmatrix} \rho \\ u \\ p \end{pmatrix} = \begin{cases}
\begin{pmatrix} 1 \\  0\\ 1 \end{pmatrix}& x<0.5 \\
\begin{pmatrix} 0.125\\ 0 \\ 0.1 \end{pmatrix} & x\ge0.5
\end{cases}$
& outflow \\
\hline
\end{tabular}
\label{tab:pde_ic_bc_summary}
\end{table}
\begin{table}[h]
\centering
\renewcommand{\arraystretch}{1.25}
\small
\caption{PINN architecture and training hyperparameters used for both Euler equation (Sod shock tube test) and inviscid Burgers'equation with Riemann type initial conditions.}
\begin{tabular}{lcc}
\hline
\textbf{Component} & \textbf{Value / Description} & \textbf{Notes} \\
\hline
Input dimensions & 2 & $(x, t)$ \\
Output dimensions & 1  & $u$ \\
Hidden layers & 4 & \\
Neurons per hidden layer & 64 & \\
Activation function (hidden) & $\tanh$ & \\
Output activation & softplus + $10^{-6}$ & Ensures positivity \\
Kernel initializer & Glorot Uniform & \\
Optimizer & Adam & \\
Learning rate & $10^{-3}$ & \\
Total collocation points & 60,000 & \\
Initial condition points & 600 & \\
Boundary points per side & 300 & \\
Training epochs & 50,000 & \\
\hline
\end{tabular}
\label{tab:pinn_architecture}
\end{table}

\end{document}